\newcommand{\p}[1]{\ensuremath{\mathord{\left(#1\right)}}}
\newcommand{\abs}[1]{\ensuremath{\mathord{\left\vert#1\right\vert}}}
\newcommand{\clint}[2]{\ensuremath{\left[#1, #2\right]}}
\newcommand{\downto}{\ensuremath{\downarrow}}
\newcommand{\RR}{\ensuremath{\mathbb{R}}}
\newcommand{\RRc}{\ensuremath{\overline{\RR}}}
\newcommand{\defeq}{\ensuremath{:=}}
\newcommand{\set}[1]{\ensuremath{\left\lbrace #1 \right\rbrace}}
\newcommand{\setcond}[2]{\ensuremath{\left\lbrace #1 \,\middle\vert\, #2 \right\rbrace}}
\newcommand{\weakto}{\ensuremath{\rightharpoonup}}
\let\subset\subseteq
\DeclareMathOperator{\Prox}{Prox}
\DeclareMathOperator{\Proj}{Proj}
\theoremstyle{plain}
\newtheorem{lemma}{Lemma}
\newtheorem{theorem}{Theorem}
\newtheorem{corollary}{Corollary}
\theoremstyle{remark}
\newtheorem*{remark}{Remark}
\theoremstyle{definition}
\newtheorem{example}{Example}
\newtheorem*{msc}{2010 Mathematics Subject Classification}
\newtheorem*{keys}{Keywords}
\title{Backward-Backward Splitting in Hadamard Spaces}
\author{Sebastian Banert\thanks{University of Vienna, Faculty of Mathematics, Oskar-Morgenstern-Platz 1, 1090 Vienna, Austria; sebastian.banert@univie.ac.at}}
\begin{document}
\maketitle
\begin{abstract}
  The backward-backward algorithm is a tool for finding minima of a regularization of the sum of two convex functions in Hilbert spaces. We generalize this setting to Hadamard spaces and prove the convergence of an error-tolerant version of the backward-backward method.
\end{abstract}
\begin{keys}
  Algorithm, Backward-backward splitting, CAT(0) space, Convex optimization, Hadamard space, Proximal point, Nonexpansive mapping, Weak convergence.
\end{keys}
\begin{msc}
  30L99, 65K05, 90C25
\end{msc}

\section{Introduction}
Proximal splitting methods provide powerful techniques for solving non-differentiable convex optimization problems in Hilbert spaces, see e.g. \cite{CombettesPesquet:2011} for a survey on this topic in the context of signal processing.

Recently, Ba\v{c}\'{a}k et al. \cite{BacakSearstonSims:2012, Bacak:2013} investigated the convergence of the proximal point algorithm and the alternating projection method for convex functions in Hadamard spaces, which are also known as complete CAT(0) spaces. The aim of this work is to generalize both approaches and to give the first example of a proximal splitting method in an Hadamard space.

This work is organized as follows: In Section \ref{sec:Preliminaries}, we set up our terminology. Particular attention is given to the geometry of Hadamard spaces (Section \ref{subsec:Prelims:Geometry}), where we mention inequality \eqref{eq:quad_triangle_inequality}, which is a stronger form of the triangle inequality in CAT(0) spaces. It enables us to generalize some well-known facts from Hilbert to Hadamard spaces, though its proof is elementary. In Section \ref{subsec:Prelims:Convexity}, the emphasis rests on convex functions, where our terminology is adopted from e.g. \cite{BauschkeCombettes:2011} in the context of Hilbert spaces and might be unusual in the community of CAT(0) spaces. Section \ref{subsec:Prelims:Weak} is devoted to weak convergence. For the history of generalizing weak convergence from Hilbert to Hadamard spaces, see e.g. \cite[Section 2.3]{BacakSearstonSims:2012}. In Section \ref{sec:algorithm}, we present a convergence analysis of the backward-backward algorithm and show its tolerance with respect to summable error sequences.

\section{Preliminaries}\label{sec:Preliminaries}
\subsection{Geometry of an Hadamard space}\label{subsec:Prelims:Geometry}
An \emph{Hadamard space} $\p{X, d}$ is a complete metric space, where to each two points $x, y\in X$ a \emph{midpoint} $m\in X$ can be assigned such that
\begin{equation}\label{eq:Hadamard:midpoint}
  d\p{z, m}^2 \leq \frac{1}{2} d\p{z, x}^2 + \frac{1}{2} d\p{z, y}^2 - \frac{1}{4} d\p{x, y}^2
\end{equation}
for all $z\in X$. If $X$ is a closed, convex subset of a Hilbert space with the metric induced by the inner product, relation \eqref{eq:Hadamard:midpoint} holds with equality for $m = \frac{1}{2}\p{x + y}$ and all $z\in X$ by the parallelogram identity.

More generally, for each two points $x, y\in X$, there is a map $\gamma_{x, y}: \clint{0}{1} \to X$, such that
\begin{equation}\label{eq:Hadamard:geodesic}
  d\p{z, \gamma_{x, y}\p{\lambda}}^2 \leq \p{1-\lambda} d\p{z, x}^2 + \lambda d\p{z, y}^2 - \lambda \p{1-\lambda} d\p{x, y}^2
\end{equation}
for all $z\in X$ and $\lambda\in\clint{0}{1}$. The curve $\gamma_{x, y}$ is uniquely determined and called the \emph{geodesic} joining $x$ and $y$. It holds
\[
  d\p{\gamma_{x, y}\p{\lambda_1}, \gamma_{x, y}\p{\lambda_2}} = \abs{\lambda_1 - \lambda_2} d\p{x, y}
\]
for $0 \leq \lambda_1 \leq \lambda_2 \leq 1$, $\gamma_{x, y}\p{0} = x$ and $\gamma_{x, y}\p{1} = y$.
The \emph{geodesic segment} joining $x\in X$ and $y\in X$ is defined as
\[
  \clint{x}{y} \defeq \setcond{\gamma_{x, y}\p{\lambda}}{0 \leq \lambda \leq 1}.
\]

From \eqref{eq:Hadamard:geodesic} (or \eqref{eq:Hadamard:midpoint}) one obtains a useful inequality by
\begin{align*}
  0 &\leq d\p{\gamma_{x, y}\p{\frac{1}{2}}, \gamma_{z, w}\p{\frac{1}{2}}}^2 \\
  &\leq \frac{1}{2} d\p{\gamma_{x, y}\p{\frac{1}{2}}, z}^2 + \frac{1}{2} d\p{\gamma_{x, y}\p{\frac{1}{2}}, w}^2 - \frac{1}{4}d\p{z, w}^2 \\
  &\leq \frac{1}{4}\p{d\p{x, z}^2 + d\p{y, z}^2 + d\p{x, w}^2 + d\p{y, w}^2 - d\p{x, y}^2 - d\p{z, w}^2}
\end{align*}
for all $x, y, z, w\in X$, which yields
\begin{equation}\label{eq:quad_triangle_inequality}
  d\p{x, y}^2 + d\p{z, w}^2 \leq d\p{x, z}^2 + d\p{x, w}^2 + d\p{y, z}^2 + d\p{y, w}^2.
\end{equation}
For two Hadamard spaces $X$ and $Y$, the Cartesian product $X \times Y$ is an Hadamard space with the metric given by
\[
  d\p{\p{x_1, y_1}, \p{x_2, y_2}}^2 = d\p{x_1, x_2}^2 + d\p{y_1, y_2}^2
\]
and the geodesics
\[
  \gamma_{\p{x_1, y_1}, \p{x_2, y_2}}\p{\lambda} = \p{\gamma_{x_1, x_2}\p{\lambda}, \gamma_{y_1, y_2}\p{\lambda}}
\]
for $x_1, x_2 \in X$ and $y_1, y_2\in Y$. We shall write $X^2$ for $X\times X$.

In what follows, let $X$ be an Hadamard space.

\subsection{Convexity, proximal points and firm nonexpansiveness}\label{subsec:Prelims:Convexity}
A set $C\subseteq X$ is called \emph{convex} if it contains the geodesics between all of its points, i.e., $\gamma_{x, y}\p{\lambda} \in C$ for all $x, y\in C$ and $\lambda \in \clint{0}{1}$. A map $f: X\to \RRc \defeq \RR \cup \set{\pm\infty}$ is called \emph{proper} if it is not constantly $+\infty$ and does not take the value $-\infty$. It is called \emph{convex} if, for any $x, y\in X$ and $\lambda\in\clint{0}{1}$, the inequality
\[
  f\p{\gamma_{x, y}\p{\lambda}} \leq \p{1-\lambda} f\p{x} + \lambda f\p{y}
\]
holds. If $f: X\to \RRc$ is convex, proper and lower semicontinuous and $x\in X$, then the function $y \mapsto f\p{y} + \frac{1}{2} d\p{x, y}^2$ has a unique minimizer \cite[Lemma 2]{Jost:1995}, which will be denoted by $\Prox_f\p{x}$, the \emph{proximal point} of $f$ corresponding to $x$. If $y = \Prox_f\p{x}$, $0 < \lambda \leq 1$ and $z\in X$, then it holds
\begin{align*}
  f\p{y} &\leq f\p{\gamma_{y, z}\p{\lambda}} + \frac{1}{2} d\p{x, \gamma_{y, z}\p{\lambda}}^2 - \frac{1}{2} d\p{x, y}^2 \\
  &\leq \p{1-\lambda} f\p{y} + \lambda f\p{z} \\ &\mathrel{\phantom{\leq}}\mathop{+} \frac{1}{2}\p{\p{1-\lambda} d\p{x, y}^2 + \lambda d\p{x, z}^2 - \lambda\p{1-\lambda} d\p{y, z}^2 - d\p{x, y}^2},
\end{align*}
so
\[
  \lambda f\p{y} \leq \lambda f\p{z} + \frac{1}{2}\p{-\lambda d\p{x, y}^2 + \lambda d\p{x, z}^2 - \lambda\p{1-\lambda} d\p{y, z}^2}.
\]
Dividing by $\lambda$ and letting $\lambda \downto 0$ yields
\begin{equation}\label{eq:prox_fval}
  f\p{y} \leq f\p{z} + \frac{1}{2} d\p{x, z}^2 - \frac{1}{2} d\p{x, y}^2 - \frac{1}{2}d\p{y, z}^2.
\end{equation}
For $x_1, x_2\in X$ and $y_i = \Prox_f\p{x_i}$, $i = 1, 2$, we have, by \eqref{eq:prox_fval},
\begin{align*}
  f\p{y_2} &\leq f\p{y_1} + \frac{1}{2}\p{d\p{x_2, y_1}^2 - d\p{x_2, y_2}^2 - d\p{y_2, y_1}^2}, \\
  f\p{y_1} &\leq f\p{y_2} + \frac{1}{2}\p{d\p{x_1, y_2}^2 - d\p{x_1, y_1}^2 - d\p{y_1, y_2}^2}.
\end{align*}
By adding both inequalities, we obtain
\begin{equation}\label{eq:firmly_nonexpansive}
  d\p{y_1, y_2}^2 \leq \frac{1}{2}\p{d\p{x_1, y_2}^2 + d\p{x_2, y_1}^2 - d\p{x_1, y_1}^2 - d\p{x_2, y_2}^2},
\end{equation}
which we refer to by saying that the mapping $x\mapsto \Prox_f\p{x}$ is \emph{firmly nonexpansive}.

In particular, a firmly nonexpansive mapping is \emph{nonexpansive}, i.e., Lipschitz continuous with constant $1$: let $x_1, x_2, y_1, y_2$ satisfy \eqref{eq:firmly_nonexpansive}, then from \eqref{eq:quad_triangle_inequality}, it follows that
\begin{align*}
  &\mathop{\phantom{\leq}}d\p{y_1, y_2}^2 \\
  &\leq \frac{1}{2}\p{d\p{x_1, x_2}^2 + d\p{x_1, y_1}^2 + d\p{y_2, x_2}^2 + d\p{y_2, y_1}^2 - d\p{x_1, y_1}^2 - d\p{x_2, y_2}^2} \\
  &= \frac{1}{2}\p{d\p{x_1, x_2}^2 + d\p{y_1, y_2}^2},
\end{align*}
so $d\p{y_1, y_2} \leq d\p{x_1, x_2}$.

\begin{example}\label{ex:indicator}
  For a nonempty, closed, convex set $C\subseteq X$, the \emph{indicator function}
  \[
    \delta_C: X\to \RRc, \qquad \delta_C\p{x} = \begin{cases}0& \text{if } x\in C, \\ +\infty& \text{otherwise,}\end{cases}
  \]
  is proper, convex and lower semicontinuous, and $\Prox_{\delta_C}\p{x} = \Proj_C\p{x}$ for all $x\in X$, where $\Proj_C$ is the \emph{metric projection} on C, i.e., $\Proj_C \p{x}$ uniquely minimizes the function $y\mapsto d\p{x, y}^2$ over $y\in C$.
\end{example}

\begin{example}\label{ex:distsquare}
  The function which maps $\p{x, y} \in X^2$ on $d\p{x, y}^2$ is convex. Indeed, we have, by \eqref{eq:Hadamard:geodesic} and \eqref{eq:quad_triangle_inequality},
  \begin{align*}
    d\p{\gamma_{x_1, x_2}\p{\lambda}, \gamma_{y_1, y_2}\p{\lambda}}^2 &\leq \p{1-\lambda}^2 d\p{x_1, y_1}^2 + \lambda^2 d\p{x_2, y_2}^2  \\ &\mathrel{\phantom{\leq}}\mathop{+} \lambda\p{1-\lambda}\p{d\p{x_2, y_1}^2 + d\p{y_2, x_1}^2 - d\p{y_1, y_2}^2 - d\p{x_1, x_2}^2} \\
    &\leq \p{1-\lambda}^2 d\p{x_1, y_1}^2 + \lambda^2 d\p{x_2, y_2}^2 \\ &\mathrel{\phantom{\leq}}\mathop{+} \lambda\p{1-\lambda} \p{d\p{x_1, y_1}^2 + d\p{x_2, y_2}^2} \\
    &= \p{1-\lambda} d\p{x_1, y_1}^2 + \lambda d\p{x_2, y_2}^2
  \end{align*}
  for all $x_1, x_2, y_1, y_2\in X$.
\end{example}

A function $f: X\to \RRc$ is called \emph{uniformly convex} if there exists some nondecreasing function $\phi: \left[0, +\infty\right) \to \clint{0}{+\infty}$ (which is called the \emph{modulus} of uniform convexity) such that $\phi\p{t} = 0 \iff t = 0$ ($t\geq 0$) and
  \[
    f\p{\gamma_{x, y}\p{\lambda}} \leq \p{1-\lambda} f\p{x} + \lambda f\p{y} - \lambda\p{1-\lambda} \phi\p{d\p{x, y}}
  \]
  for all $x, y\in X$. In this case, inequality \eqref{eq:prox_fval} can be improved: let $x, z\in X$, $\lambda \in \clint{0}{1}$ and $y = \Prox_f\p{x}$, then it holds
  \begin{align*}
    f\p{y} &\leq f\p{\gamma_{y, z}\p{\lambda}} + \frac{1}{2} d\p{x, \gamma_{y, z}\p{\lambda}}^2 - \frac{1}{2} d\p{x, y}^2 \\
    &\leq \p{1-\lambda} f\p{y} + \lambda f\p{z} - \lambda\p{1-\lambda} \phi\p{d\p{y, z}} \\
    &\mathrel{\phantom{\leq}}\mathop{+} \frac{1}{2}\p{\p{1-\lambda} d\p{x, y}^2 + \lambda d\p{x, z}^2 - \lambda\p{1-\lambda} d\p{y, z}^2 - d\p{x, y}^2},
  \end{align*}
  so, by a calculation as above,
  \begin{equation}\label{eq:fval_uniform}
    f\p{y} \leq f\p{z} - \phi\p{d\p{y, z}} + \frac{1}{2}\p{d\p{x, z}^2 - d\p{x, y}^2 - d\p{y, z}^2}.
  \end{equation}

  \subsection{Weak convergence}\label{subsec:Prelims:Weak}
  We say that a sequence $\p{x_n}_{n\geq 0}$ in $X$ \emph{weakly converges} to $x\in X$, $x_n \weakto x$, if, for every $y\in X$, $\Proj_{\clint{x}{y}}\p{x_n} \to x$. A \emph{weak cluster point} of a sequence in $X$ is a point $x\in X$ such that some subsequence weakly converges to $x$. (Since we are not dealing with nets, we will not distinguish between weak cluster points, which are determined by convergent subnets, and weak sequential cluster points.)

  \begin{lemma}[{see \cite[Theorem 2.1]{Jost:1994}}]\label{lem:subseqexistence}
    Every bounded sequence has a weakly convergent subsequence.
  \end{lemma}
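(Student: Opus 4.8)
The plan is to realize the weak limit as the \emph{asymptotic center} of a suitably chosen subsequence, combining the selection technique of Lim with the CAT(0) geometry recorded above.

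First I would associate to a bounded sequence $\p{x_n}$ the function $\psi\p{z} \defeq \limsup_{n\to\infty} d\p{z, x_n}^2$. Applying \eqref{eq:Hadamard:geodesic} to the maps $z\mapsto d\p{z, x_n}^2$ and passing to the $\limsup$ shows that $\psi$ is uniformly convex with modulus $\phi\p{t} = t^2$; since $\p{x_n}$ is bounded, $\psi$ is moreover continuous (Lipschitz on bounded sets) and coercive. For a minimizing sequence $\p{z_k}$, uniform convexity evaluated at the midpoint gives $\tfrac{1}{4} d\p{z_j, z_k}^2 \leq \tfrac{1}{2} \psi\p{z_j} + \tfrac{1}{2} \psi\p{z_k} - \inf \psi \to 0$, so $\p{z_k}$ is Cauchy; completeness and continuity then produce a minimizer, and uniform convexity makes it unique. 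I call this minimizer the \emph{asymptotic center} of $\p{x_n}$ and $\inf\psi$ its squared \emph{asymptotic radius}.

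Since restricting to a subsequence can only lower the $\limsup$, and hence the asymptotic radius, I would next extract a \emph{regular} subsequence $\p{x_{n_k}}$, meaning one whose asymptotic radius is shared by all of its subsequences. This is a soft nested-subsequence/diagonal argument: one selects subsequences along which the infimal attainable radius stabilizes and then diagonalizes. For the regular subsequence, any further subsequence has the same radius, and its center $x$ still attains that radius for the sub-subsequence; by uniqueness of the center, $x$ is therefore the asymptotic center of \emph{every} subsequence of $\p{x_{n_k}}$. It remains to deduce weak convergence in the stated sense. Fix $y\in X$ and put $q_k \defeq \Proj_{\clint{x}{y}}\p{x_{n_k}}$. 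Because $\clint{x}{y}$ is a compact geodesic segment, it suffices to show that each convergent subsequence $q_{k_j} \to p$ satisfies $p = x$. Writing $\Proj_{\clint{x}{y}} = \Prox_{\delta_{\clint{x}{y}}}$ (Example \ref{ex:indicator}) and applying \eqref{eq:prox_fval} with the comparison point $x\in\clint{x}{y}$ gives $d\p{x_{n_{k_j}}, q_{k_j}}^2 + d\p{q_{k_j}, x}^2 \leq d\p{x_{n_{k_j}}, x}^2$. Passing to the $\limsup$ in $j$, using $q_{k_j}\to p$, yields $r\p{p}^2 + d\p{p, x}^2 \leq r\p{x}^2$, where $r\p{z} \defeq \limsup_j d\p{z, x_{n_{k_j}}}$ is minimized uniquely at $x$. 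If $p\neq x$, then $r\p{p} > r\p{x}$, contradicting this inequality; hence $p = x$. As $y$ was arbitrary, $q_k \to x$ for every $y$, i.e.\ $x_{n_k} \weakto x$.

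The heart of the argument is twofold. Its geometric content lives in the first and third steps and rests on the CAT(0) inequalities \eqref{eq:Hadamard:geodesic}, \eqref{eq:quad_triangle_inequality} and \eqref{eq:prox_fval}: these make $\psi$ uniformly convex, forcing a \emph{unique} center, and supply the obtuse-angle inequality for the metric projection. The step I expect to be most delicate is the purely set-theoretic extraction of a regular subsequence together with the bookkeeping that promotes ``center of the sequence'' to ``center of every subsequence''. It is routine in spirit but is where care is needed, and it is precisely the reason why the asymptotic center of the full sequence alone does not already witness weak convergence.
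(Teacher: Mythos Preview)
The paper does not give its own proof of this lemma; it is simply quoted from \cite[Theorem~2.1]{Jost:1994}. Your argument is correct and is essentially the standard one behind that reference: \eqref{eq:Hadamard:geodesic} makes $z\mapsto\limsup_n d\p{z,x_n}^2$ uniformly convex, hence uniquely minimized; Lim's diagonal extraction yields a subsequence whose asymptotic radius is shared by all further subsequences, which then forces the (unique) center to be shared as well; and the obtuse-angle inequality \eqref{eq:prox_fval} applied to $\Proj_{\clint{x}{y}}$ converts ``$x$ is the asymptotic center of every subsequence'' into the projection definition of $\weakto$. The one place that deserves an explicit line is the extraction you flag: choose nested subsequences $S_{k+1}\subset S_k$ with $\rho\p{S_{k+1}}$ within $2^{-k}$ of $\inf\setcond{\rho\p{T}}{T\subset S_k}$, so that the radii and the infima squeeze to a common limit $r_*$; the diagonal sequence $D$ is eventually in each $S_k$, whence $\rho\p{D}\leq r_*$, while every $T\subset D$ is eventually in each $S_k$ as well, whence $\rho\p{T}\geq r_*$, and $D$ is regular. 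Incidentally, the paper immediately afterwards quotes the Esp\'{\i}nola--Fern\'andez-Le\'on characterization ``$x_n\weakto x$ iff $x$ is the asymptotic center of every subsequence'', which would absorb your third step, but since that lemma is itself only cited, your direct verification via \eqref{eq:prox_fval} is the more self-contained route.
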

  
  \begin{lemma}[{see \cite[Lemma 3.1]{Bacak:2013}}]\label{lem:weaklowersemicty}
    A convex, lower semicontinuous function is weakly lower semicontinuous, i.e., for a sequence $\p{x_n}_{n\geq 0}$ in $X$ with $x_n \weakto x \in X$, it holds
    \[
      \liminf_{n\to\infty} f\p{x_n} \geq f\p{x}.
    \]
  \end{lemma}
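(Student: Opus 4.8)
The plan is to deduce weak lower semicontinuity of $f$ from the geometric fact that a nonempty closed convex set is weakly sequentially closed, applied to the sublevel sets of $f$. Concretely, suppose $x_n \weakto x$ and set $L \defeq \liminf_{n\to\infty} f\p{x_n}$. If $L = +\infty$ there is nothing to prove, so assume $L < +\infty$ and, passing to a subsequence (which still converges weakly to $x$, directly from the definition of weak convergence), assume $f\p{x_n} \to L$. Arguing by contradiction, suppose $L < f\p{x}$ and fix $\alpha$ with $L < \alpha < f\p{x}$. Then $C \defeq \setcond{y\in X}{f\p{y} \leq \alpha}$ is convex (because $f$ is convex), closed (because $f$ is lower semicontinuous) and nonempty; moreover $x_n \in C$ for all large $n$, whereas $x \notin C$. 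It therefore suffices to show that $x$, being a weak limit of points of $C$, must lie in $C$.

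To this end, let $p \defeq \Proj_C\p{x}$, which is well defined by Example \ref{ex:indicator} since $C$ is nonempty, closed and convex; as $x \notin C$ we have $p \neq x$, i.e.\ $d\p{x,p} > 0$. By the definition of weak convergence, applied with the particular choice $y = p$, we have $\Proj_{\clint{x}{p}}\p{x_n} \to x$. The contradiction will come from showing that in fact $\Proj_{\clint{x}{p}}\p{c} = p$ for every $c \in C$, so that the sequence $\p{\Proj_{\clint{x}{p}}\p{x_n}}_{n\geq 0}$ is constantly equal to $p$ and hence cannot converge to $x \neq p$.

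The crux is a sliding property of the projection onto $C$: for every $w \in \clint{x}{p}$ one has $\Proj_C\p{w} = p$. This follows from the uniqueness of the metric projection together with $d\p{x,w} + d\p{w,p} = d\p{x,p}$ (additivity of the distance along the geodesic): indeed $d\p{x, \Proj_C\p{w}} \leq d\p{x,w} + d\p{w, \Proj_C\p{w}} \leq d\p{x,w} + d\p{w,p} = d\p{x,p} = \min_{c\in C} d\p{x,c}$, and since $\Proj_C\p{w} \in C$ this forces equality, whence $\Proj_C\p{w} = p$ by uniqueness. Now fix $c \in C$ and put $q \defeq \Proj_{\clint{x}{p}}\p{c} \in \clint{x}{p}$. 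By the sliding property, $\Proj_C\p{q} = p$, so inequality \eqref{eq:prox_fval} applied to the indicator $\delta_C$ (with $x$, $y$, $z$ there replaced by $q$, $p$, $c$) yields $d\p{q,p}^2 + d\p{p,c}^2 \leq d\p{q,c}^2$. On the other hand, $q$ minimizes $d\p{c,\cdot}$ over $\clint{x}{p} \ni p$, so $d\p{q,c} \leq d\p{p,c}$; combining the two inequalities gives $d\p{q,p}^2 \leq 0$, i.e.\ $q = p$, as desired.

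The only genuinely geometric ingredient is this sliding property together with the variational inequality \eqref{eq:prox_fval}, which encodes the obtuse-angle characterization of the metric projection; once these are available, the argument is purely formal. I expect the reduction to sublevel sets to be routine, and I anticipate the main obstacle to be isolating and proving the sliding property $\Proj_C\p{w} = p$ for $w \in \clint{x}{p}$, since it is this fact that converts the weak-convergence hypothesis $\Proj_{\clint{x}{p}}\p{x_n} \to x$ into the contradiction $\Proj_{\clint{x}{p}}\p{x_n} = p \neq x$.
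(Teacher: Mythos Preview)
The paper does not give its own proof of this lemma; it merely records the statement with a reference to \cite[Lemma 3.1]{Bacak:2013}. Your argument is correct and is, in fact, the standard one: reduce weak lower semicontinuity of $f$ to weak sequential closedness of the sublevel sets $C = \setcond{y}{f\p{y} \leq \alpha}$, and establish the latter by combining the obtuse-angle inequality for the metric projection onto $C$ (which is exactly \eqref{eq:prox_fval} specialised to $\delta_C$) with the definition of weak convergence via projections onto geodesic segments. Your two-step route---first the ``sliding'' identity $\Proj_C\p{w} = p$ for every $w \in \clint{x}{p}$, then $\Proj_{\clint{x}{p}}\p{c} = p$ for every $c \in C$---is clean and leaves no gap; the handling of the case $L = -\infty$ also goes through, since one may still choose $\alpha < f\p{x}$ and the tail of the subsequence lies in $C$.
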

  
  Weak convergence may also be expressed in terms of asymptotic radius and center. For a sequence $\p{x_n}_{n\geq 0}$ in $X$ and $x\in X$, the \emph{asymptotic radius} of $\p{x_n}_{n\geq 0}$ with respect to $x$ is
  \[
    r\p{\p{x_n}_{n\geq 0}, x} \defeq \limsup_{n\geq 0} d\p{x_n, x},
  \]
  and the \emph{asymptotic center} is a minimizer of the mapping $X\ni x \mapsto r\p{\p{x_n}_{n\geq 0}, x}$. The asymptotic center of a sequence in an Hadamard space always exists and is unique \cite[Proposition 7]{DhompongsaKirkSims:2006}.

  \begin{lemma}[{see \cite[Proposition 5.2]{EspinolaFernandez-Leon:2009}}]
    For a sequence $\p{x_n}_{n\geq 0}$ in $X$ and $x\in X$ it holds $x_n \weakto x$ if and only if $x$ is the asymptotic center of each subsequence of $\p{x_n}_{n\geq 0}$.
  \end{lemma}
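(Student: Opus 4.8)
The plan is to prove both implications by working directly with the squared distances that define the asymptotic radius, using as the central tool the projection inequality that follows from \eqref{eq:prox_fval}: taking $f = \delta_C$ for a nonempty closed convex set $C$ (cf. Example \ref{ex:indicator}), $p = \Proj_C\p{w}$ and any $c\in C$, inequality \eqref{eq:prox_fval} reduces to $d\p{w, c}^2 \geq d\p{w, p}^2 + d\p{p, c}^2$. I will also use the elementary observation that every subsequence of a weakly convergent sequence converges weakly to the same limit (a subsequence of $\Proj_{\clint{x}{y}}\p{x_n}\to x$ still converges to $x$), so that the equivalence only needs to be established in a form that passes automatically to subsequences. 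Throughout I restrict to bounded sequences, which is no loss here: an unbounded (sub)sequence has infinite asymptotic radius at every point and hence no unique asymptotic center, so the asymptotic-center condition can only hold for bounded sequences.

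For the implication that $x_n \weakto x$ forces $x$ to be the asymptotic center, I fix an arbitrary $z\in X$ and set $p_n = \Proj_{\clint{x}{z}}\p{x_n}$. Weak convergence gives $p_n \to x$. Applying the projection inequality on $C = \clint{x}{z}$ with $c = z$ yields $d\p{x_n, z}^2 \geq d\p{x_n, p_n}^2$, and since $\abs{d\p{x_n, p_n} - d\p{x_n, x}} \leq d\p{p_n, x} \to 0$, the sequences $d\p{x_n, p_n}$ and $d\p{x_n, x}$ share the same $\limsup$; therefore
\[
  r\p{\p{x_n}_{n\geq 0}, z} \geq \limsup_{n\to\infty} d\p{x_n, p_n} = r\p{\p{x_n}_{n\geq 0}, x}.
\]
As $z$ was arbitrary, $x$ minimizes the asymptotic radius and, by its uniqueness, is the asymptotic center. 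The same computation applied to any subsequence (which also converges weakly to $x$) shows that $x$ is the asymptotic center of each subsequence.

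For the converse I argue by contraposition: suppose $\Proj_{\clint{x}{y}}\p{x_n} \not\to x$ for some $y\in X$ (necessarily $y \neq x$). Then there are $\epsilon > 0$ and a subsequence $\p{x_{n_k}}$ with $q_k \defeq \Proj_{\clint{x}{y}}\p{x_{n_k}}$ satisfying $d\p{q_k, x} \geq \epsilon$. Writing $q_k = \gamma_{x, y}\p{t_k}$, this means $t_k \geq \epsilon' \defeq \epsilon / d\p{x, y} > 0$. I fix the single point $c \defeq \gamma_{x, y}\p{\tau}$ with $\tau \defeq \epsilon'/2$; since $\tau \leq t_k$, one has $c = \gamma_{x, q_k}\p{\tau / t_k}$, and \eqref{eq:Hadamard:geodesic} together with the nearest-point estimate $d\p{x_{n_k}, q_k} \leq d\p{x_{n_k}, x}$ (valid because $x \in \clint{x}{y}$) gives
\[
  d\p{x_{n_k}, c}^2 \leq d\p{x_{n_k}, x}^2 - \frac{\tau}{t_k}\p{1 - \frac{\tau}{t_k}} d\p{x, q_k}^2 \leq d\p{x_{n_k}, x}^2 - \frac{\tau\epsilon^2}{2},
\]
using $\tau / t_k \leq \frac{1}{2}$, $\tau / t_k \geq \tau$ and $d\p{x, q_k} \geq \epsilon$. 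Passing to $\limsup$ over $k$ yields $r\p{\p{x_{n_k}}, c} < r\p{\p{x_{n_k}}, x}$, so $x$ is not the asymptotic center of the subsequence $\p{x_{n_k}}$, contradicting the hypothesis. Hence $\Proj_{\clint{x}{y}}\p{x_n} \to x$ for every $y$, i.e. $x_n \weakto x$.

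The main obstacle is this converse direction: the difficulty is to exhibit a \emph{fixed} competitor point $c$, independent of $k$, at which the squared distances drop by a fixed positive amount. The quantitative geodesic inequality \eqref{eq:Hadamard:geodesic} supplies the drop $\mu\p{1-\mu} d\p{x, q_k}^2$ when one steps a fraction $\mu$ from $x$ toward $q_k$, but the resulting point depends on $k$; the key point is that all $q_k$ lie on the \emph{same} geodesic $\clint{x}{y}$ and are bounded away from $x$ in parameter ($t_k \geq \epsilon'$), which lets the single point $c = \gamma_{x, y}\p{\tau}$ serve for every $k$ while keeping the coefficient $\mu\p{1-\mu}$ bounded below. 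Verifying the elementary bounds on $\tau / t_k$ and reconciling the boundedness conventions for the asymptotic center are the remaining routine points.
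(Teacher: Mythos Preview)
The paper does not give its own proof of this lemma; it simply cites \cite[Proposition 5.2]{EspinolaFernandez-Leon:2009}. There is therefore nothing in the paper to compare against, and your proposal stands as a self-contained argument for the cited result.

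Your proof is correct. The forward direction is clean: from the projection inequality $d\p{x_n,z}^2 \geq d\p{x_n,p_n}^2 + d\p{p_n,z}^2$ and $p_n\to x$ you get $r\p{\p{x_n},z}\geq r\p{\p{x_n},x}$ for every $z$, and uniqueness of the asymptotic center finishes it. The converse is the more delicate half, and your choice of the single competitor $c=\gamma_{x,y}\p{\epsilon'/2}$ is exactly the right idea: since all the projections $q_k$ lie on the common segment $\clint{x}{y}$ with parameter $t_k\geq\epsilon'$, the point $c$ sits between $x$ and every $q_k$, and \eqref{eq:Hadamard:geodesic} then yields the uniform drop $d\p{x_{n_k},c}^2 \leq d\p{x_{n_k},x}^2 - \tau\epsilon^2/2$. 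The bounds $\tau/t_k\in\clint{\tau}{1/2}$ are verified correctly.

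One small point about your reduction to bounded sequences: you argue that an unbounded sequence has no unique asymptotic center, so the asymptotic-center hypothesis forces boundedness, which justifies the restriction for the converse implication. For the forward implication, however, you would also need that a weakly convergent sequence is bounded, and you do not address this. That fact is true in Hadamard spaces (and is implicit in the paper's blanket assertion, borrowed from \cite{DhompongsaKirkSims:2006}, that asymptotic centers are always unique), but strictly speaking it is an extra ingredient. Since the paper only invokes this lemma for bounded sequences (in Lemma~\ref{lem:wscpoint}), the point is cosmetic.
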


  \begin{lemma}\label{lem:wscpoint}
    Let $\p{x_n}_{n\geq 0}$ be a bounded sequence in $X$ and $x\in X$. Then $x_n \weakto x$ if and only if $x$ is the unique weak cluster point of $X$.
  \end{lemma}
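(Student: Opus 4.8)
The plan is to prove the two implications separately, using the characterization of weak convergence through asymptotic centers (the lemma immediately preceding this statement) together with Lemma \ref{lem:subseqexistence} on the existence of weakly convergent subsequences. Throughout, the metric projection $\Proj_{\clint{x}{y}}$ onto the geodesic segment is well defined because $\clint{x}{y}$ is nonempty, closed and convex, cf.\ Example \ref{ex:indicator}.

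First I would treat necessity (``only if''). Suppose $x_n \weakto x$. Directly from the definition of weak convergence, every subsequence $\p{x_{n_k}}_{k\geq 0}$ satisfies $\Proj_{\clint{x}{y}}\p{x_{n_k}} \to x$ for every $y\in X$, hence $x_{n_k}\weakto x$; in particular $x$ is a weak cluster point of $\p{x_n}_{n\geq 0}$. For uniqueness, let $x'$ be any weak cluster point, so that $x_{n_k}\weakto x'$ along some subsequence. Since this same subsequence also converges weakly to $x$, the asymptotic-center characterization shows that both $x$ and $x'$ are the asymptotic center of $\p{x_{n_k}}_{k\geq 0}$, and uniqueness of the asymptotic center forces $x'=x$.

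For sufficiency (``if''), I would argue by contradiction. Assume $x$ is the unique weak cluster point but $x_n\not\weakto x$. Then there exist $y\in X$, some $\varepsilon>0$ and a subsequence $\p{x_{n_k}}_{k\geq 0}$ with $d\p{\Proj_{\clint{x}{y}}\p{x_{n_k}}, x}\geq\varepsilon$ for all $k$. This subsequence is bounded, so by Lemma \ref{lem:subseqexistence} it has a weakly convergent sub-subsequence, whose weak limit is again a weak cluster point of $\p{x_n}_{n\geq 0}$ and therefore equals $x$. But weak convergence of that sub-subsequence to $x$ means that $\Proj_{\clint{x}{y}}$ of its terms tends to $x$, contradicting the lower bound $\varepsilon$.

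The passage to subsequences and the reduction to a single bad direction $y$ are routine. The only point requiring genuine care is the uniqueness of weak limits invoked in the necessity part, and I expect this to be the main obstacle; it is resolved cleanly by appealing to the asymptotic-center characterization and the uniqueness of asymptotic centers, rather than by arguing directly from the projection definition.
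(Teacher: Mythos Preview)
Your argument is correct. For the necessity direction you spell out what the paper dismisses as trivial, and your use of the asymptotic-center characterization to obtain uniqueness of weak limits is fine (though you could have argued even more directly: if $x_{n_k}\weakto x$ and $x_{n_k}\weakto x'$, then $\Proj_{\clint{x}{x'}}\p{x_{n_k}}$ converges both to $x$ and to $x'$).

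For the sufficiency direction you and the paper take genuinely different routes. The paper works through asymptotic centers: from $x_n\not\weakto x$ it extracts, via the characterizing lemma, a subsequence whose asymptotic center is some $y\neq x$, then passes to a further subsequence along which $d\p{x_{n_k},x}$ converges and stays strictly above $\limsup d\p{x_{n_k},y}$, and finally applies Lemma \ref{lem:subseqexistence} to produce a weak cluster point that cannot be $x$ because its asymptotic radius with respect to $y$ is smaller. Your argument is more elementary: you stay with the projection definition, pick a bad direction $y$ and a subsequence along which $\Proj_{\clint{x}{y}}$ is bounded away from $x$, extract a weakly convergent sub-subsequence by Lemma \ref{lem:subseqexistence}, and obtain a contradiction because its limit must be $x$ by hypothesis. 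Your version avoids the limsup manipulation and the asymptotic-radius comparison; the paper's version, on the other hand, exhibits explicitly a weak cluster point different from $x$ rather than merely deriving a contradiction.
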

  \begin{proof}
    We only prove the nontrivial implication: assume $x_n \not\weakto x$, then there exists some $y\in X\setminus\set{x}$ and a subsequence $\p{x_{n_k}}_{k\geq 0}$ of which $y$ is the asymptotic center, in particular
    \[
      \limsup_{k\to\infty} d\p{x_{n_k}, x} > \limsup_{n\to\infty} d\p{x_{n_k}, y}.
    \]
    By transition to a subsequence (without renaming), we can assure
    \[
      \lim_{k\to\infty} d\p{x_{n_k}, x} > \limsup_{n\to\infty} d\p{x_{n_k}, y}.
    \]
    Now, choose a weakly convergent subsequence of the bounded sequence $\p{x_{n_k}}_{k\geq 0}$. Its asymptotic center cannot be $x$ (since the asymptotic radius with respect to $y$ is smaller), so $x$ cannot be the only weak cluster point of $\p{x_n}_{n\geq 0}$.
  \end{proof}

  The next result is a generalization of \cite[Proposition 3.3 (iii)]{BacakSearstonSims:2012} (see also \cite[Lemma 2.39]{BauschkeCombettes:2011}), where Fej\'er monotonicity is required to show weak convergence. To analyze an algorithm which is generally not Fej\'er monotone, we have to relax this assumption, but the method of the proof remains the same.
  \begin{lemma}\label{lem:weakconv}
    Let $\p{x_n}_{n\geq 0}$ be a bounded sequence in $X$, and let $C\subseteq X$. Suppose that, for every $c\in C$, the sequence $\p{d\p{x_n, c}}_{n\geq 0}$ converges and all weak cluster points of $\p{x_n}_{n\geq 0}$ belong to $C$. Then, $x_n \weakto c$ for some $c\in C$.
  \end{lemma}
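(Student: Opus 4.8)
The plan is to reduce the statement to the uniqueness of the weak cluster point and then to exclude two distinct cluster points by playing the asymptotic-center inequality against itself. First, since $\p{x_n}_{n\geq 0}$ is bounded, Lemma \ref{lem:subseqexistence} yields a weakly convergent subsequence, so the sequence has at least one weak cluster point $c$, which lies in $C$ by assumption. In view of Lemma \ref{lem:wscpoint}, it then suffices to show that $c$ is the \emph{only} weak cluster point of $\p{x_n}_{n\geq 0}$; once this is done, $x_n \weakto c$ with $c\in C$, as required.

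Second, I would argue by contradiction. Suppose $c$ and $c'$ were two distinct weak cluster points; by hypothesis both belong to $C$, so both sequences $\p{d\p{x_n, c}}_{n\geq 0}$ and $\p{d\p{x_n, c'}}_{n\geq 0}$ converge, say to $L$ and $L'$. I would pick a subsequence $\p{x_{n_k}}_{k\geq 0}$ with $x_{n_k}\weakto c$; by the characterization of weak convergence through asymptotic centers (\cite[Proposition 5.2]{EspinolaFernandez-Leon:2009}), $c$ is then the asymptotic center of $\p{x_{n_k}}_{k\geq 0}$. Since this center is unique \cite[Proposition 7]{DhompongsaKirkSims:2006} and $c'\neq c$, I obtain the \emph{strict} inequality $r\p{\p{x_{n_k}}_{k\geq 0}, c} < r\p{\p{x_{n_k}}_{k\geq 0}, c'}$. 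Now, because the whole distance sequences converge, every subsequential $\limsup$ coincides with the full limit, so $r\p{\p{x_{n_k}}_{k\geq 0}, c} = L$ and $r\p{\p{x_{n_k}}_{k\geq 0}, c'} = L'$, whence $L < L'$. Applying the identical reasoning to a subsequence weakly converging to $c'$ gives $L' < L$, a contradiction; therefore $c = c'$.

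I expect the only genuinely delicate point to be the passage from the subsequential asymptotic radii to the limits of the full distance sequences: it is precisely the convergence hypothesis that $\p{d\p{x_n, c}}_{n\geq 0}$ converges for every $c\in C$ which makes the two strict inequalities comparable, and hence mutually contradictory. Without it one would control only the $\limsup$ along individual subsequences, and the comparison would collapse — this is exactly the relaxation of Fej\'er monotonicity announced before the statement, and it is what keeps the method of \cite[Proposition 3.3 (iii)]{BacakSearstonSims:2012} intact in the present, more general situation.
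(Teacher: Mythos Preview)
Your argument is correct and follows essentially the same route as the paper: reduce via Lemma~\ref{lem:wscpoint} to uniqueness of the weak cluster point, then derive a contradiction from two distinct cluster points by combining the strict asymptotic-center inequality with the fact that convergence of the full distance sequences forces the subsequential $\limsup$'s to equal the global limits. The only cosmetic difference is that you spell out the existence of at least one weak cluster point via Lemma~\ref{lem:subseqexistence}, which the paper leaves implicit.
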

  \begin{proof}
    By Lemma \ref{lem:wscpoint}, it suffices to show that $\p{x_n}_{n\geq 0}$ has at most one weak cluster point, so assume that $\p{x_{n_k}}_{k\geq 0}$ and $\p{x_{m_k}}_{k\geq 0}$ are weakly convergent subsequences with $x_{n_k} \weakto x\in C$, $x_{m_k} \weakto y\in C$ and $x\neq y$. In particular, $x$ is the asymptotic center of $\p{x_{n_k}}_{n\geq 0}$, and $y$ is the asymptotic center of $\p{x_{m_k}}_{k\geq 0}$. By uniqueness of the asymptotic centers, we have
    \begin{align*}
      \lim_{n\to \infty} d\p{x_n, x} &= \lim_{k\to\infty} d\p{x_{n_k}, x} < \limsup_{k\to\infty} d\p{x_{n_k}, y} = \lim_{n\to\infty} d\p{x_n, y} = \lim_{k\to\infty} d\p{x_{m_k}, y} \\ &< \limsup_{k\to\infty} d\p{x_{m_k}, x} = \lim_{n\to\infty} d\p{x_n, x}.
    \end{align*}
    This contradiction shows $x = y$.
  \end{proof}

  The notion of weak convergence in Hadamard spaces generalizes the weak convergence in Hilbert spaces. It is also known as \emph{$\Delta$-convergence} \cite{Lim:1976}.

  \section{The backward-backward algorithm and its convergence}\label{sec:algorithm}
  Let $X$ be an Hadamard space, and let $f, g: X\to \RRc$ be two proper, convex and lower semicontinuous functions. Our aim is to find a solution of the problem
  \begin{equation}\label{eq:problem:bb}
    \text{minimize } \Phi\p{x, y} \defeq f\p{x} + g\p{y} + \frac{1}{2\gamma} d\p{x, y}^2\qquad \text{over } x, y\in X,
  \end{equation}
  where $\gamma > 0$. The \emph{backward-backward algorithm} is determined by some starting point $x_0\in X$ and the iteration procedure $y_n \defeq \Prox_{\gamma g} x_n$, $x_{n+1} \defeq \Prox_{\gamma f} y_n$ for all $n\geq 0$.

  Problem \eqref{eq:problem:bb} and the backward-backward algorithm in Hilbert spaces are dealt with in \cite{AckerPrestel:1980, BauschkeCombettesReich:2005}. In this work, we allow errors in the evalution of the proximal points, namely, we choose the sequences $\p{x_n}_{n\geq 0}$ and $\p{y_n}_{n\geq 0}$ such that
  \begin{equation}\label{eq:bb:errors}
    \sum_{n=0}^\infty d\p{y_n, \Prox_{\gamma g} x_n} < +\infty \qquad \text{and} \qquad \sum_{n=0}^\infty d\p{x_{n+1}, \Prox_{\gamma f} y_n} < +\infty.
  \end{equation}
  Additional consequences of the error-free case are given in Corollary \ref{cor:errorfree} below.
  
  \begin{theorem}\label{thm:bb}
    Let $\Phi$ be bounded below on $X^2$, and let $\p{x_n}_{n\geq 0}$ and $\p{y_n}_{n\geq 0}$ be sequences which satisfy \eqref{eq:bb:errors}. Then the following hold:
    \begin{enumerate}[label=(\alph*)]
      \item \label{item:thm:bb:summable} $\displaystyle\sum_{n = 0}^\infty d\p{x_{n+1}, x_n}^2 < +\infty$ and $\displaystyle\sum_{n=0}^\infty d\p{y_{n+1}, y_n}^2 < +\infty$;
    \end{enumerate}
    if there exists a solution of \eqref{eq:problem:bb}, then
    \begin{enumerate}[label=(\alph*), resume]
      \item \label{item:thm:bb:err_fval}
        \begin{center}
        \begin{minipage}{\linewidth}
          \begin{align*}
            \inf\setcond{\Phi\p{x, y}}{x, y\in X} &= \lim_{n\to\infty} \Phi\p{\Prox_{\gamma f}\p{\Prox_{\gamma g}\p{x_n}}, \Prox_{\gamma g}\p{x_n}} \\
            &= \lim_{n\to\infty} \Phi\p{\Prox_{\gamma f}\p{y_n}, \Prox_{\gamma g}\p{\Prox_{\gamma f}\p{y_n}}};
          \end{align*}
        \end{minipage}
      \end{center}
    \item \label{item:thm:bb:weakconv} $\p{\p{x_n, y_n}}_{n\geq 0}$ converges weakly to a solution of \eqref{eq:problem:bb};
    \item \label{item:thm:bb:uniform} if one of the functions $f$ and $g$ is uniformly monotone, then $\p{\p{x_n, y_n}}_{n\geq 0}$ converges to the unique solution of \eqref{eq:problem:bb} with respect to the metric $d$.
    \end{enumerate}
  \end{theorem}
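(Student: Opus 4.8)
Throughout write $P \defeq \Prox_{\gamma f}$ and $Q \defeq \Prox_{\gamma g}$, which are firmly nonexpansive by \eqref{eq:firmly_nonexpansive}, and set $\bar y_n \defeq Q\p{x_n}$, $\bar x_{n+1} \defeq P\p{y_n}$ for the exact proximal points, so that $\alpha_n \defeq d\p{y_n, \bar y_n}$ and $\beta_n \defeq d\p{x_{n+1}, \bar x_{n+1}}$ are summable by \eqref{eq:bb:errors}. Since $y$ minimizes $\Phi\p{x, \cdot}$ exactly when $y = Q\p{x}$, and $x$ minimizes $\Phi\p{\cdot, y}$ exactly when $x = P\p{y}$, a pair $\p{x^*, y^*}$ solves \eqref{eq:problem:bb} if and only if $x^* = P\p{y^*}$ and $y^* = Q\p{x^*}$; thus an error-free step is exact alternating minimization of $\Phi$, and solutions are the fixed points of $P\circ Q$ (resp.\ $Q\circ P$).

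For \ref{item:thm:bb:summable} I would first treat the error-free case: applying \eqref{eq:prox_fval} to the $g$-step (base point $x_n$, comparison $y_{n-1}$) and to the $f$-step (base point $y_{n-1}$, comparison $x_{n-1}$) and adding the two, after multiplying $\Phi$ by $\gamma$, gives the one-step decrease $\gamma\Phi\p{x_n, y_n} \le \gamma\Phi\p{x_{n-1}, y_{n-1}} - \tfrac12 d\p{x_n, x_{n-1}}^2 - \tfrac12 d\p{y_n, y_{n-1}}^2$, which telescopes because $\Phi$ is bounded below. With errors one cannot evaluate $\Phi$ at the iterates, since they may leave the domains of $f$ or $g$; instead I would track the finite merit $\sigma_n \defeq \Phi\p{\bar x_{n+1}, \bar y_n} \ge \inf\Phi$ and apply \eqref{eq:prox_fval} comparing consecutive exact points ($g$-step to $\bar y_{n-1}$, $f$-step to $\bar x_n$), so that the function values telescope and the errors enter only through distance terms involving $x_n$ and $y_n$; the triangle inequality $d\p{x_{n+1}, x_n} \le \beta_n + d\p{\bar x_{n+1}, \bar x_n} + \beta_{n-1}$ and its $y$-analogue then transport summability to the iterates. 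I expect the main obstacle to sit exactly here: the residual error terms have the form (error)$\cdot$(regularisation distance $d\p{\bar x_{n+1}, \bar y_n}$), and to sum them one needs an a priori bound on these regularisation distances, which I would obtain by bootstrapping from the near-monotonicity of $\sigma_n$ (when a solution exists it is immediate from the estimate in the next paragraph).

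For \ref{item:thm:bb:err_fval} and \ref{item:thm:bb:weakconv} fix a solution $\p{x^*, y^*}$ and set $u_n \defeq \bar y_n$, $v_n \defeq P\p{u_n}$. Applying \eqref{eq:prox_fval} to the exact $g$-step at $x_n$ with comparison $y^*$ and to the exact $f$-step at $u_n$ with comparison $x^*$ and adding, the only terms obstructing a Fej\'er estimate are $d\p{x_n, y^*}^2 + d\p{u_n, x^*}^2$; these I bound by the quadrilateral inequality \eqref{eq:quad_triangle_inequality} with the assignment $\p{x, y, z, w} = \p{x_n, y^*, u_n, x^*}$, which collapses everything to $\gamma\p{\Phi\p{v_n, u_n} - \inf\Phi} \le \tfrac12 d\p{x_n, x^*}^2 - \tfrac12 d\p{v_n, x^*}^2$. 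Using $d\p{v_n, x_{n+1}} \le \alpha_n + \beta_n$ together with boundedness of $\p{x_n}$ — which comes from the quasi-Fej\'er inequality $d\p{x_{n+1}, x^*} \le d\p{x_n, x^*} + \alpha_n + \beta_n$ furnished by nonexpansiveness — and summing yields $\sum\p{\Phi\p{v_n, u_n} - \inf\Phi} < +\infty$, hence $\Phi\p{v_n, u_n} \to \inf\Phi$; the second limit follows by the symmetric computation with the roles of $f, g$ and $x, y$ exchanged. For \ref{item:thm:bb:weakconv}, the quasi-Fej\'er inequalities for both coordinates show that $d\p{x_n, x^*}$ and $d\p{y_n, y^*}$ converge, so the distance from $\p{x_n, y_n}$ to each solution converges; and any weak cluster point $\p{\hat x, \hat y}$ is a solution, because $\Phi$ is convex (Example \ref{ex:distsquare}) and lower semicontinuous, hence weakly lower semicontinuous by Lemma \ref{lem:weaklowersemicty}, while $d\p{u_n, y_n} = \alpha_n \to 0$ and $d\p{v_n, x_n} \to 0$ (using \ref{item:thm:bb:summable}) make $\p{v_n, u_n}$ share the weak limit, so that $\Phi\p{\hat x, \hat y} \le \liminf \Phi\p{v_n, u_n} = \inf\Phi$. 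Lemma \ref{lem:weakconv}, applied in $X^2$ with $C$ the solution set, then gives \ref{item:thm:bb:weakconv}.

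For \ref{item:thm:bb:uniform}, suppose say $f$ is uniformly convex with modulus $\phi$. I would rerun the computation of \ref{item:thm:bb:err_fval} with the sharpened inequality \eqref{eq:fval_uniform} at the $f$-step, which adds a nonnegative term $\gamma\phi\p{d\p{v_n, x^*}}$ to the left-hand side of the telescoping estimate; summability then forces $\phi\p{d\p{v_n, x^*}} \to 0$, and since $\phi$ is nondecreasing with $\phi\p{t} = 0 \iff t = 0$ this gives $d\p{v_n, x^*} \to 0$, whence $x_n \to x^*$ and then $y_n \to y^* = Q\p{x^*}$ by nonexpansiveness and $\alpha_n \to 0$. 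Uniqueness of the solution is a short separate argument: uniform convexity of $f$ forces the first coordinates of any two solutions to agree, after which uniform convexity of $y \mapsto \tfrac{1}{2\gamma} d\p{x^*, y}^2$ forces the second coordinates to agree as well.
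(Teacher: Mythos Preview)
Your treatment of \ref{item:thm:bb:err_fval}--\ref{item:thm:bb:uniform} is essentially the paper's argument: the fundamental estimate $\gamma\p{\Phi\p{v_n,u_n}-\ell}\le\tfrac12 d\p{x_n,x^*}^2-\tfrac12 d\p{v_n,x^*}^2$ obtained by adding the two instances of \eqref{eq:prox_fval} and collapsing with \eqref{eq:quad_triangle_inequality}, the quasi-Fej\'er inequality from nonexpansiveness, weak lower semicontinuity via Lemma~\ref{lem:weaklowersemicty}, and Lemma~\ref{lem:weakconv} are exactly what the paper does. Your use of \ref{item:thm:bb:summable} to get $d\p{v_n,x_n}\to 0$ is arguably cleaner than the paper's index bookkeeping.

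The genuine gap is in \ref{item:thm:bb:summable}. Your merit $\sigma_n=\Phi\p{\bar x_{n+1},\bar y_n}$ with $\bar x_{n+1}=P\p{y_n}$ mixes a prox step from the \emph{inexact} $y_n$ with one from $x_n$, and, as you correctly diagnose, the telescoping leaves residuals of the form $\p{\alpha_n+\beta_{n-1}}\cdot d\p{x_n,y_n}$. Bounding $d\p{x_n,y_n}$ by ``bootstrapping from the near-monotonicity of $\sigma_n$'' is not justified: even if $\sigma_n$ stays below a constant $M$, the identity $\sigma_n=f\p{\bar x_{n+1}}+g\p{\bar y_n}+\tfrac{1}{2\gamma}d\p{\bar x_{n+1},\bar y_n}^2$ only bounds the distance if $f$ and $g$ are individually bounded below, which is not assumed---the hypothesis is merely $\inf\Phi>-\infty$. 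When \eqref{eq:problem:bb} has no solution your parenthetical escape route is unavailable, yet \ref{item:thm:bb:summable} must still hold.

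The paper avoids this entirely by the very device you introduce only later for \ref{item:thm:bb:err_fval}: track $\Phi\p{v_n,u_n}$ with $u_n=Q\p{x_n}$ and $v_n=P\p{Q\p{x_n}}$, i.e.\ \emph{two exact} steps from $x_n$. Adding the two instances of \eqref{eq:prox_fval} at these exact points and applying \eqref{eq:quad_triangle_inequality} gives, for arbitrary $\p{x,y}$,
\[
\Phi\p{v_n,u_n}\le\Phi\p{x,y}+\tfrac{1}{2\gamma}\bigl(d\p{x_n,x}^2-d\p{v_n,x}^2\bigr).
\]
Now set $\p{x,y}=\p{v_{n-1},u_{n-1}}$. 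The only error term is $d\p{x_n,v_{n-1}}^2\le\p{\alpha_{n-1}+\beta_{n-1}}^2$, which is summable \emph{without any a priori distance bound}; telescoping against $\inf\Phi>-\infty$ yields $\sum d\p{v_n,v_{n-1}}^2<+\infty$, and the triangle inequality $d\p{x_{n+1},x_n}\le d\p{x_{n+1},v_n}+d\p{v_n,v_{n-1}}+d\p{v_{n-1},x_n}$ transports this to the iterates. In short: use your $\p{v_n,u_n}$ from part \ref{item:thm:bb:err_fval} already in part \ref{item:thm:bb:summable}, and the bootstrapping problem disappears.
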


  \begin{proof}
    Due to the symmetry between $f$ and $g$, we will prove only one of the assertions in each of the statements \ref{item:thm:bb:summable}--\ref{item:thm:bb:uniform}, mostly the one concerning the sequence $\p{x_n}_{n\geq 0}$. The other assertion follows by interchanging $f$ and $g$ and starting the iteration at $y_0$.

    Set $\delta_n \defeq d\p{y_n, \Prox_{\gamma g} x_n}$ and $\varepsilon_n \defeq d\p{x_{n+1}, \Prox_{\gamma f} y_n}$. Furthermore, for $n\geq 0$, set $\tilde x_0 \defeq x_0$, $\tilde y_0 \defeq y_0$,
    \[
      \tilde x_{n+1} \defeq \Prox_{\gamma f}\p{\Prox_{\gamma g}\p{x_n}}, \qquad \tilde y_{n+1} \defeq \Prox_{\gamma g}\p{\Prox_{\gamma f}\p{y_n}}.
    \]

    By the nonexpansiveness of the proximal point mapping, we have
    \begin{align*}
      d\p{\tilde x_{n+1}, x_{n+1}} &\leq d\p{\Prox_{\gamma f}\p{\Prox_{\gamma g}\p{x_n}}, \Prox_{\gamma f}\p{y_n}} + d\p{\Prox_{\gamma f}\p{y_n}, x_{n+1}} \\
      &\leq d\p{\Prox_{\gamma g}\p{x_n}, y_n} + \varepsilon_n \\
      &= \delta_n + \varepsilon_n
    \end{align*}
    and analogously $d\p{\tilde y_{n+1}, y_{n+1}} \leq \delta_n + \varepsilon_n$.

    By \eqref{eq:prox_fval}, the following inequalities hold for all $x, y\in X$:
    \begin{align}
      f\p{\tilde x_{n+1}} &\leq f\p{x} + \frac{1}{2\gamma}\p{d\p{\Prox_{\gamma g}\p{x_n}, x}^2 - d\p{\Prox_{\gamma g}\p{x_n}, \tilde x_{n+1}}^2 - d\p{\tilde x_{n+1}, x}^2}, \label{eq:err:fval:f1} \\
      g\p{\Prox_{\gamma g}\p{x_n}} &\leq g\p{y} + \frac{1}{2\gamma}\p{d\p{x_n, y}^2 - d\p{x_n, \Prox_{\gamma g}\p{x_n}}^2 - d\p{\Prox_{\gamma g}\p{x_n}, y}^2}, \label{eq:err:fval:g1}
    \end{align}
    Adding \eqref{eq:err:fval:f1} and \eqref{eq:err:fval:g1} and subsequently applying \eqref{eq:quad_triangle_inequality}, we obtain
    \begin{align}
      \Phi\p{\tilde x_{n+1}, \Prox_{\gamma g}\p{x_n}} &\leq \Phi\p{x, y} + \frac{1}{2\gamma}\Bigl(d\p{\Prox_{\gamma g}\p{x_n}, x}^2 + d\p{x_n, y}^2 - d\p{x, y}^2 \nonumber \\ &\mathrel{\phantom{=}}\mathop{-} d\p{x_n, \Prox_{\gamma g}\p{x_n}}^2 - d\p{\tilde x_{n+1}, x}^2 - d\p{\Prox_{\gamma g}\p{x_n}, y}^2\Bigr) \nonumber \\
      &\leq \Phi\p{x, y} + \frac{1}{2\gamma}\p{d\p{x_n, x}^2 - d\p{\tilde x_{n+1}, x}^2}. \label{eq:err:fundineq:x}
    \end{align}
    Setting $x = \tilde x_n$ and $y = \Prox_{\gamma g}\p{x_{n-1}}$ in \eqref{eq:err:fundineq:x} yields
    \begin{align}
      \Phi\p{\tilde x_{n+1}, \Prox_{\gamma g}\p{x_n}} &\leq \Phi\p{\tilde x_n, \Prox_{\gamma g}\p{x_{n-1}}} + \frac{1}{2\gamma}\p{d\p{x_n, \tilde x_n}^2 - d\p{\tilde x_{n+1}, \tilde x_n}^2}. \label{eq:err:telescope:x}
    \end{align}

    Summing up the inequality \eqref{eq:err:telescope:x} for $n = 1, \ldots, N-1$ with $N\geq 1$ gives
    \begin{align*}
      \sum_{n=0}^{N-1} d\p{x_{n+1}, x_n}^2 &\leq \sum_{n=0}^{N-1} \p{d\p{x_{n+1}, \tilde x_{n+1}} + d\p{\tilde x_{n+1}, \tilde x_n} + d\p{\tilde x_n, x_n}}^2 \\
      &\leq 3 \sum_{n=0}^{N-1} \p{d\p{x_{n+1}, \tilde x_{n+1}}^2 + d\p{\tilde x_{n+1}, \tilde x_n}^2 + d\p{\tilde x_n, x_n}^2} \\
      &\leq 6 \sum_{n=0}^\infty \p{\delta_n + \varepsilon_n}^2 + 3\sum_{n=0}^{N-1} d\p{\tilde x_{n+1}, \tilde x_n}^2 \\
      &\leq 9 \sum_{n=0}^\infty \p{\delta_n + \varepsilon_n}^2 + 6\gamma \p{\Phi\p{\tilde x_1, \Prox_{\gamma g}\p{x_0}} - \Phi\p{\tilde x_N, \Prox_{\gamma g}\p{x_{N-1}}}},
    \end{align*}
    which shows \ref{item:thm:bb:summable}.

    From now on, let $\p{\bar x, \bar y} \in X\times X$ be a solution of \eqref{eq:problem:bb}, which implies $\bar y = \Prox_{\gamma g}\p{\bar x}$ and $\bar x = \Prox_{\gamma f}\p{\bar y}$, and let
    \[
      \ell \defeq \inf\setcond{\Phi\p{x, y}}{x, y \in X} = \Phi\p{\bar x, \bar y}.
    \]
    be the optimal value of $\Phi$.
    We have
    \begin{align*}
      d\p{x_{n+1}, \bar x} &\leq d\p{\tilde x_{n+1}, \bar x} + d\p{\tilde x_{n+1}, x_{n+1}} \\
      &\leq d\p{\Prox_{\gamma f}\p{\Prox_{\gamma g}\p{x_n}}, \Prox_{\gamma f}\p{\Prox_{\gamma g}\p{\bar x}}} + \delta_n + \varepsilon_n \\
      &\leq d\p{x_n, \bar x} + \delta_n + \varepsilon_n.
    \end{align*}
    Therefore, the sequence 
    \[
      \p{d\p{x_n, \bar x} - \sum_{k=0}^{n-1} \p{\delta_k + \varepsilon_k}}_{n\geq 0}
    \]
    is monotone decreasing and bounded below by $-\sum_{k=0}^\infty \p{\delta_k + \varepsilon_k}$, thus convergent. From \eqref{eq:bb:errors}, it follows that the sequence $\p{d\p{x_n, \bar x}}_{n\geq 0}$ is convergent, say $d\p{x_n, \bar x} \to \xi$. Since $d\p{x_n, \tilde x_n} \leq \delta_{n-1} + \varepsilon_{n-1} \to 0$, we also have $d\p{\tilde x_n, \bar x} \to \xi$.

    In \eqref{eq:err:fundineq:x}, set $x = \bar x$ and $y = \bar y$ to obtain
    \begin{equation}\label{eq:err:solution}
      \ell \leq \Phi\p{\tilde x_{n+1}, \Prox_{\gamma g}\p{x_n}} \leq \ell + \frac{1}{2\gamma} \p{d\p{x_n, \bar x}^2 - d\p{\tilde x_{n+1}, \bar x}^2}.
    \end{equation}
    The right-hand side converges to $\ell + \frac{1}{2\gamma}\p{\xi^2 - \xi^2} = \ell$, so $\Phi\p{\tilde x_{n+1}, \Prox_{\gamma g}\p{x_n}} \to \ell$, and \ref{item:thm:bb:err_fval} holds.

    To prove \ref{item:thm:bb:weakconv}, it remains (by Lemma \ref{lem:weakconv}) to show that each weak cluster point of $\p{x_n, y_n}$ is a solution of \eqref{eq:problem:bb}. Let $\p{x, y}$ be a weak cluster point of $\p{x_n, y_n}$, say $x_{n_k} \weakto x$ and $y_{n_k} \weakto y$. Since $d\p{\tilde x_{n_k}, x_{n_k}} \leq \delta_{n_k-1} + \varepsilon_{n_k-1} \to 0$ and $d\p{\Prox_{\gamma g}\p{x_{n_k}}, y_{n_k}} = \delta_n \to 0$, it also holds $\tilde x_{n_k} \weakto x$ and $\Prox_{\gamma g} x_{n_k} \weakto y$.

    By the weak lower semicontinuity (see Lemma \ref{lem:weaklowersemicty} and Example \ref{ex:distsquare}) of the functions $f, g$ and $\p{x, y} \mapsto d\p{x, y}^2$, one gets
    \begin{align*}
      \Phi\p{x, y} 
      &= f\p{x} + g\p{y} + \frac{1}{2\gamma} d\p{x, y}^2 \\
      &\leq \liminf_{k\to\infty} f\p{\tilde x_{n_k}} + \liminf_{k\to\infty} g\p{\Prox_{\gamma g} x_{n_k}} + \frac{1}{2\gamma} \liminf_{k\to\infty} d\p{\tilde x_{n_k}, \Prox_{\gamma g} x_{n_k}}^2 \\
      &\leq \liminf_{k\to\infty} \p{f\p{\tilde x_{n_k}} + g\p{\Prox_{\gamma g} x_{n_k}} + \frac{1}{2\gamma} d\p{\tilde x_{n_k}, \Prox_{\gamma g} x_{n_k}}^2} \\
      &= \liminf_{k\to\infty} \Phi\p{\tilde x_{n_k}, \Prox_{\gamma g} x_{n_k}} = \ell,
    \end{align*}
    so $\p{x, y}$ is a solution of \eqref{eq:problem:bb}, which proves \ref{item:thm:bb:weakconv}.

    Now, let $f$ be uniformly convex with modulus $\phi$, so, by using \eqref{eq:fval_uniform} instead of \eqref{eq:prox_fval}, a calculation analogous to \eqref{eq:err:solution} yields
    \[
      \Phi\p{\tilde x_{n+1}, \Prox_{\gamma g}\p{x_n}} \leq \ell + \frac{1}{2\gamma} \p{d\p{x_n, \bar x}^2 - d\p{\tilde x_{n+1}, \bar x}^2} - \frac{1}{\gamma} \phi\p{d\p{\tilde x_{n+1}, \bar x}}
    \]
    or
    \[
      \phi\p{d\p{\tilde x_{n+1}, \bar x}} \leq \gamma\p{\ell - \Phi\p{\tilde x_{n+1}, \Prox_{\gamma g}\p{x_n}}} + \frac{1}{2} \p{d\p{x_n, \bar x}^2 - d\p{\tilde x_{n+1}, \bar x}^2}
    \]
    for any solution $\p{\bar x, \bar y}$ of \eqref{eq:problem:bb}. Suppose that $x_n \not\to \bar x$, then (since $d\p{x_n, \tilde x_n} \to 0$) $\tilde x_n \not\to \bar x$, and there exists $\varepsilon > 0$ such that for all $N\geq 1$ there exists some $n\geq N$ such that $d\p{\tilde x_n, \bar x} > \varepsilon$, which implies $\phi\p{d\p{\tilde x_n, \bar x}} \geq \phi\p{\varepsilon} > 0$. On the other hand,
    \[
      \phi\p{d\p{\tilde x_{n+1}, \bar x}} \leq \frac{1}{2} \p{d\p{x_n, \bar x}^2 - d\p{\tilde x_{n+1}, \bar x}^2} \to 0.
    \]
    This contradiction shows $x_n \to \bar x$. On the other hand,
    \begin{align*}
      d\p{y_n, \bar y}
      &\leq d\p{y_n, \Prox_{\gamma g}\p{x_n}} + d\p{\Prox_{\gamma g}\p{x_n}, \bar y} \\
      &= \delta_n + d\p{\Prox_{\gamma g}\p{x_n}, \Prox_{\gamma g}\p{\bar x}} \\
      &\leq \delta_n + d\p{x_n, \bar x}.
    \end{align*}
    Since $\delta_n \to 0$ and $x_n \to \bar x$, we have $y_n \to \bar y$. We have shown $x_n \to \bar x$ and $y_n \to \bar y$ for \emph{any} solution $\p{\bar x, \bar y}$ of \eqref{eq:problem:bb}, so the solution must be unique.
  \end{proof}

  \begin{remark}
    \begin{enumerate}[label=(\alph*)]
      \item Both the proximal point algorithm with a constant stepsize and the method of alternating projections are special cases of the backward-backward method presented here: for the former set $g\p{x} = 0$ for all $x\in X$ and for the latter (see Example \ref{ex:indicator}) set $f = \delta_C$ and $g = \delta_D$ with $C, D \subset X$ nonempty, closed and convex.
      \item In general, the backward-backward method cannot be expected to converge with respect to the metric $d$, since the proximal point algorithm (even if restricted to Hilbert spaces and constant stepsizes) is known to converge only weakly in general \cite[Corollary 5.1]{Gueler:1991}.
      \item The backward-backward algorithm in Hilbert spaces is a special case of the forward-backward algorithm \cite[Example 10.11]{CombettesPesquet:2011}, the error-tolerant version of which was considered in \cite{Vu:2013}.
    \end{enumerate}
  \end{remark}

  \begin{corollary}\label{cor:errorfree} Let $x_0 \in X$, and let $y_n \defeq \Prox_{\gamma g}\p{x_n}$ and $x_{n+1} \defeq \Prox_{\gamma f}\p{y_n}$ for $n\geq 0$. Then, additionally to the properties \ref{item:thm:bb:summable}--\ref{item:thm:bb:uniform} of Theorem \ref{thm:bb}, the following hold without further assumptions:
    \begin{enumerate}[label=(\alph*), start=5]
      \item \label{item:thm:bb:mondec} $\Phi\p{x_{n+1}, y_{n+1}} \leq \Phi\p{x_{n+1}, y_n} \leq \Phi\p{x_n, y_n}$ for $n\geq 1$,
      \item \label{item:thm:bb:fval} $\displaystyle\lim_{n\to\infty} \Phi\p{x_n, y_n} = \lim_{n\to\infty} \Phi\p{x_{n+1}, y_n} = \inf\setcond{\Phi\p{x, y}}{x, y\in X}$ (this value might be $-\infty$);
      \item \label{item:thm:bb:fejer} for each solution $\p{\bar x, \bar y}$ of \eqref{eq:problem:bb}, it holds
        \[
          d\p{x_{n+1}, \bar x} \leq d\p{y_n, \bar y} \leq d\p{x_n, \bar x}.
        \]
    \end{enumerate}
  \end{corollary}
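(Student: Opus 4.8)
The plan is to exploit that in the error-free case the auxiliary iterates from the proof of Theorem~\ref{thm:bb} collapse onto the genuine ones: since $\delta_n = \varepsilon_n = 0$, we have $\tilde x_{n+1} = \Prox_{\gamma f}\p{\Prox_{\gamma g}\p{x_n}} = x_{n+1}$ and $\Prox_{\gamma g}\p{x_n} = y_n$, so the fundamental inequality \eqref{eq:err:fundineq:x} reduces to $\Phi\p{x_{n+1}, y_n} \leq \Phi\p{x, y} + \frac{1}{2\gamma}\p{d\p{x_n, x}^2 - d\p{x_{n+1}, x}^2}$ for all $x, y\in X$. Part \ref{item:thm:bb:mondec} I would then read off directly from the variational characterization of the proximal point. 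Indeed, $y_{n+1} = \Prox_{\gamma g}\p{x_{n+1}}$ is by definition the minimizer of $y\mapsto g\p{y} + \frac{1}{2\gamma} d\p{x_{n+1}, y}^2$, so evaluating this map at $y = y_n$ gives $\Phi\p{x_{n+1}, y_{n+1}} \leq \Phi\p{x_{n+1}, y_n}$; likewise $x_{n+1} = \Prox_{\gamma f}\p{y_n}$ minimizes $x\mapsto f\p{x} + \frac{1}{2\gamma} d\p{x, y_n}^2$, and evaluating at $x = x_n$ gives $\Phi\p{x_{n+1}, y_n} \leq \Phi\p{x_n, y_n}$. (The restriction $n\geq 1$ merely guarantees that $f\p{x_n}$ and $g\p{y_n}$ are finite, so that the chain is meaningful.)

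For \ref{item:thm:bb:fval}, the two inequalities in \ref{item:thm:bb:mondec} show that $\p{\Phi\p{x_n, y_n}}_{n\geq 1}$ and $\p{\Phi\p{x_{n+1}, y_n}}_{n\geq 1}$ are nonincreasing and interlace, hence converge to a common limit $L$ (possibly $-\infty$). Since every term dominates $\ell \defeq \inf\setcond{\Phi\p{x, y}}{x, y\in X}$, we have $L\geq \ell$. For the reverse inequality I would argue by contradiction using the reduced form of \eqref{eq:err:fundineq:x}: if $L > \ell$, choose $\p{x, y}$ with $\Phi\p{x, y} < L$ and set $\eta \defeq L - \Phi\p{x, y} > 0$. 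Because $\Phi\p{x_{n+1}, y_n}\geq L$ for every $n$, the reduced inequality rearranges to $d\p{x_{n+1}, x}^2 \leq d\p{x_n, x}^2 - 2\gamma\eta$, so $d\p{x_n, x}^2$ would tend to $-\infty$, which is absurd. Hence $L = \ell$, and this argument covers the unbounded case $\ell = -\infty$ as well.

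Finally, \ref{item:thm:bb:fejer} is immediate from the nonexpansiveness of the proximal maps established in Section~\ref{subsec:Prelims:Convexity}. A solution $\p{\bar x, \bar y}$ of \eqref{eq:problem:bb} satisfies $\bar x = \Prox_{\gamma f}\p{\bar y}$ and $\bar y = \Prox_{\gamma g}\p{\bar x}$; applying nonexpansiveness of $\Prox_{\gamma f}$ to $x_{n+1} = \Prox_{\gamma f}\p{y_n}$ and $\bar x = \Prox_{\gamma f}\p{\bar y}$ yields $d\p{x_{n+1}, \bar x}\leq d\p{y_n, \bar y}$, while nonexpansiveness of $\Prox_{\gamma g}$ applied to $y_n = \Prox_{\gamma g}\p{x_n}$ and $\bar y = \Prox_{\gamma g}\p{\bar x}$ yields $d\p{y_n, \bar y}\leq d\p{x_n, \bar x}$.

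The routine parts are \ref{item:thm:bb:mondec} and \ref{item:thm:bb:fejer}, each a one-line consequence of the minimizing property, respectively the nonexpansiveness, of $\Prox$. The only step demanding genuine care is identifying the monotone limit in \ref{item:thm:bb:fval} with the infimum, particularly in the regime $\ell = -\infty$ where \eqref{eq:problem:bb} need not possess a solution and the function-value statement \ref{item:thm:bb:err_fval} of Theorem~\ref{thm:bb} does not apply; here the observation that a strict gap would force a squared distance to $-\infty$ is the decisive point.
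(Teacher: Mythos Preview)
Your proposal is correct. For \ref{item:thm:bb:mondec} and \ref{item:thm:bb:fejer} you do essentially what the paper does; for \ref{item:thm:bb:mondec} you read the monotonicity directly from the minimizing property of $\Prox$, while the paper first specializes \eqref{eq:err:fundineq:x} to the error-free case and then sets $x = x_n$, $y = y_n$, which yields the same inequality together with the quantitative surplus $-\tfrac{1}{2\gamma}d\p{x_{n+1}, x_n}^2$.

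The only genuine methodological difference is in \ref{item:thm:bb:fval}. The paper telescopes the reduced inequality \eqref{eq:fundineq:x} over $n = 0, \ldots, N-1$, divides by $N$, and lets $N\to\infty$ to get $\inf_{n\geq 0} \Phi\p{x_{n+1}, y_n} \leq \Phi\p{x, y} + \tfrac{1}{2\gamma N} d\p{x_0, x}^2 \to \Phi\p{x, y}$, then takes the infimum over $\p{x, y}$. You instead argue by contradiction: a strict gap $L > \Phi\p{x, y}$ forces $d\p{x_n, x}^2$ to drop by at least $2\gamma\eta$ per step and hence to become negative. Both arguments rest on the same reduced fundamental inequality; the paper's Ces\`aro averaging is direct and implicitly carries an $O\p{1/N}$ rate on the running minimum, while your contradiction is slightly shorter and, as you note, handles the case $\ell = -\infty$ without needing the solution-existence hypothesis behind \ref{item:thm:bb:err_fval}.
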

  \begin{proof}
    Consider \eqref{eq:err:fundineq:x}, and note that now $\tilde x_{n+1} = x_{n+1}$ and $\Prox_{\gamma g}\p{x_n} = y_n$, so that we have
    \begin{equation}\label{eq:fundineq:x}
      \Phi\p{x_{n+1}, y_n} \leq \Phi\p{x, y} + \frac{1}{2\gamma} \p{d\p{x_n, x}^2 - d\p{x_{n+1}, x}^2}
    \end{equation}
    for any $x, y\in X$. Setting $x = x_n$ and $y = y_n$ gives
    \[
      \Phi\p{x_{n+1}, y_n} \leq \Phi\p{x_n, y_n} - \frac{1}{2\gamma} d\p{x_{n+1}, x_n}^2 \leq \Phi\p{x_n, y_n}.
    \]
    The other inequality in \ref{item:thm:bb:mondec} follows by interchanging $f$ and $g$.

    Note that by \ref{item:thm:bb:mondec} the limits in \ref{item:thm:bb:fval} both coincide with $\inf\setcond{\Phi\p{x_{n+1}, y_n}}{n\geq 0}$. Sum up \eqref{eq:fundineq:x} for $n = 0, \ldots, N-1$ with $N\geq 1$ and divide by $N$ to obtain
    \begin{align*}
      \inf\setcond{\Phi\p{x_{n+1}, y_n}}{n\geq 0} &\leq \frac{1}{N}\sum_{n=0}^{N-1} \Phi\p{x_{n+1}, y_n} \\
      &\leq \Phi\p{x, y} + \frac{1}{2\gamma N} \p{d\p{x_0, x}^2 - d\p{x_N, x}^2} \\
      &\leq \Phi\p{x, y} + \frac{1}{2\gamma N} d\p{x_0, x}^2.
    \end{align*}
    Letting $N\to +\infty$ and passing to the infimum over $x, y\in X$, we see
    \[
      \inf\setcond{\Phi\p{x_{n+1}, y_n}}{n\geq 0} \leq \inf\setcond{\Phi\p{x, y}}{x, y\in X}.
    \]
    Since the reverse inequality is obvious, \ref{item:thm:bb:fval} holds.

    Statement \ref{item:thm:bb:fejer} follows from $\bar y = \Prox_{\gamma g}\p{\bar x}$, $\bar x = \Prox_{\gamma f}\p{\bar y}$ and the nonexpansiveness of the proximal point mappings.
  \end{proof}

  \section*{Acknowledgements}
  The author is grateful to Radu Ioan Bo\c{t} for many helpful discussions and to the Department of Mathematics at Chemnitz University of Technology for providing the infrastructure at which a part of this work was created.
  \printbibliography
  \end{document}